\documentclass[leqno]{article} 

\usepackage[frenchb,english]{babel}
\usepackage[T1]{fontenc}

\usepackage{amsmath}
\usepackage{amssymb}
\usepackage{amsfonts}
\usepackage{enumerate}
\usepackage{vmargin}
\usepackage[all]{xy}
\usepackage{mathrsfs}
\usepackage{mathtools}
\usepackage{lmodern}
\usepackage[colorlinks=true,pagebackref=true,linkcolor=blue]{hyperref}
\usepackage{tocloft}
\setlength{\cftbeforesecskip}{0pt}
\usepackage{ulem}
\usepackage[all]{xy}
\usepackage{comment}
\usepackage{centernot}

\setmarginsrb{3cm}{3cm}{3.5cm}{3cm}{0cm}{0cm}{1.5cm}{3cm}

\footskip1.3cm

\newcommand{\B}{\mathrm{B}}




\let\L\relax

\newcommand{\L}{\mathrm{L}}
\newcommand{\M}{\mathrm{M}}




\renewcommand{\leq}{\ensuremath{\leqslant}}
\renewcommand{\geq}{\ensuremath{\geqslant}}
\newcommand{\qed}{\hfill \vrule height6pt  width6pt depth0pt}

\newcommand{\norm}[1]{\left\Vert#1\right\Vert}

\newcommand{\co}{\colon}
\newcommand{\ov}{\overset}

\newcommand{\Id}{\mathrm{Id}} 

\newcommand{\op}{\mathrm{op}} 
\let\ker\relax 
\DeclareMathOperator{\ker}{Ker} 
\DeclareMathOperator{\Ran}{Ran} 

\newcommand{\cb}{\mathrm{cb}} 


\selectlanguage{english}
\newtheorem{thm}{Theorem}[section]

\newtheorem{prop}[thm]{Proposition}
\newtheorem{conj}[thm]{Conjecture}

\newtheorem{cor}[thm]{Corollary}
\newtheorem{lemma}[thm]{Lemma}

\newtheorem{remark}[thm]{Remark}

\newenvironment{proof}[1][]{\noindent {\it Proof #1} : }{\hbox{~}\qed
\smallskip
}

\numberwithin{equation}{section}
\usepackage[nottoc,notlot,notlof]{tocbibind}

\let\OLDthebibliography\thebibliography
\renewcommand\thebibliography[1]{
  \OLDthebibliography{#1}
  \setlength{\parskip}{0pt}
  \setlength{\itemsep}{0pt plus 0.3ex}
}

\begin{document}
\selectlanguage{english}
\title{\bfseries{A characterization of completely bounded normal Jordan $*$-homomorphisms on von Neumann algebras}}
\date{}
\author{\bfseries{C\'edric Arhancet}}

\maketitle


\begin{abstract}
We characterize completely bounded normal Jordan $*$-homomorphisms acting on von Neumann algebras. We also characterize completely positive isometries acting on noncommutative $\L^p$-spaces.     
\end{abstract}




\makeatletter
 \renewcommand{\@makefntext}[1]{#1}
 \makeatother
 \footnotetext{
 The authors are supported by the research program ANR-18-CE40-0021 (project HASCON).\\
 2010 {\it Mathematics subject classification:}
 46L51, 46L07.  
\\
{\it Key words}: Isometries, completely bounded maps, noncommutative $\L^p$-spaces, Jordan $*$-homomorphisms.}

\tableofcontents

\section{Introduction}
\label{sec:Introduction}

The study of linear isometries has been at the heart of the study of Banach spaces since the inception of the field. Indeed, Banach himself stated in his book \cite{Ban1} a result describing the isometries of the $\L^p$-space $\L^p([0,1])$ and the ones of $\ell^p$ where $1 \leq p <\infty$ with $p \not=2$. The proof was clarified by Lamperti in \cite{Lam1} who also extends this result. We refer to the books \cite{FJ1} and \cite{FJ2} for more information on isometries on Banach spaces.

It is natural to examine the isometries acting on von Neumann algebras and more generally noncommutative $\L^p$-spaces associated to von Neumann algebras where $1 \leq p \leq \infty$. Recall that Kadison \cite{Kad1} has proved that a \textit{surjective} isometry $T \co A \to B$ between unital $\mathrm{C}^*$-algebras can be written $T(x)=uJ(x)$ where $J$ is a surjective Jordan $*$-homomorphism and $u \in B$ a unitary. In the paper \cite{Yea1}, Yeadon have succeeded in establishing a description of general (not necessarily surjective) isometries on arbitrary noncommutative $\L^p$-spaces associated with semifinite von Neumann algebras, see \eqref{Yeadon}. This description relies on normal Jordan $*$-homomorphisms. We refer to the papers \cite{ArF3}, \cite{HRW1}, \cite{HSZ1}, \cite{JRS1}, \cite{LMZ1}, \cite{LMZ2}, \cite{She1}, \cite{SuV1}, \cite{Wat2}, \cite{Wat3}, \cite{Wat4} and \cite{Wat5} for more information on isometries on noncommutative $\L^p$-spaces.

It is known from a long time that a positive isometry on a noncommutative $\L^p$-space is not necessarily completely positive or completely bounded. For example, Schatten space $S^p$ the transpose map $S^p \to S^p$, $x \mapsto x^T$ is a positive surjective isometry which is neither completely positive nor completely bounded. It is a striking difference with the world of classical $\L^p$-spaces of measure spaces\footnote{\thefootnote. By \cite[Proposition 2.22]{ArK}, a positive map acting on a classical $\L^p$-space is necessarily completely positive. An isometry on a classical $\L^p$-space with $p \not=2$ is regular in the sense of \cite[(1.1)]{ArK}, hence completely bounded.}. Our first result is Corollary \ref{Cor-cp} which characterizes completely positive isometries on noncommutative $\L^p$-spaces. Our main result Theorem \ref{Th-carac-jordan-homo} characterizes normal $*$-Jordan homomorphisms which are completely bounded.

In Section \ref{sec:preliminaries}, we give background on noncommutative $\L^p$-spaces, Jordan homomorphisms and $n$-minimal operator spaces. In Section \ref{Sec-2_isometries}, we describe completely positive isometries. In Section \ref{Section-cc-isometries}, we prove our main result Theorem \ref{Th-carac-jordan-homo} which characterize completely bounded normal Jordan $*$-homomorphisms in this section. Finally, in Section \ref{Section-cb-isometries}, we state a conjecture on completely bounded isometries acting on noncommutative $\L^p$-spaces associated with finite von Neumann algebras and $p \not=2$.

\section{Preliminaries}
\label{sec:preliminaries}

\paragraph{Noncommutative $\L^p$-spaces} In this paragraph, we provide some background on noncommutative $\L^p$-spaces. Let $M$ be a semifinite von Neumann algebra equipped with a normal semifinite faithful trace $\tau$. We briefly recall the definition of the noncommutative $\L^p$-spaces $\L^p(M)$,  $0< p \leq \infty$, associated with $(\M,\tau)$ and some of their basic properties. The reader is referred to the survey \cite{PiX} and references therein for details and further properties.

If $M$ acts on some Hilbert space $H$, the elements of $\L^p(M)$ can be viewed as closed densely defined (possibly unbounded) operators on $H$. More precisely, let $M^{\prime}$ denote the commutant of $M$ in $\B(H)$. A closed densely defined operator $a$ is said to be affiliated with $M$ if $a$ commutes with every unitary of $M^{\prime}$. An affiliated operator $x$ is called measurable with respect to $(M,\tau)$ if there is a positive number $\lambda > 0$ such that $\tau(1_{[\lambda,\infty)}(|x|))<\infty$, where $1_{[\lambda,\infty)}(|x|)$ is the projection associated with the indicator function of $[\lambda,\infty)$ in the Borel functional calculus of $|x|$. Then the set $\L^0(M)$ of all measurable operators forms a $*$-algebra. We proceed with defining $\L^p(M)$ as a subspace of $\L^0(M)$. First note that for any $x \in \L^0(M)$ and any $0 < p < \infty$, the operator $|x|^p=(x^* x)^{\frac{p}{2}}$ belongs to $\L^0(M)$. If $\L^0(M)^+$ denotes the positive cone of $\L^0(M)$, that is the set of all positive operators in $\L^0(M)$, the trace $\tau$ extends to a positive tracial functional on $\L^0(M)^+$, taking values in $[0,\infty]$, also denoted by $\tau$. For any $0< p < \infty$, the noncommutative $\L^p$-space, $\L^p(M)$, associated with $(M,\tau)$, is
$$
\L^p(M)
\ov{\mathrm{def}}{=} \bigl\{x \in \L^0(M) : \tau(|x|^p) < \infty \bigr\}.
$$
For $x \in \L^p(M)$, let $\norm{x}_p \ov{\mathrm{def}}{=} \tau(|x|^p)^{\frac{1}{p}}$. For $1 \leq p < \infty$, $\norm{\cdot}_p$ defines a complete norm. We let $\L^\infty(M) \ov{\mathrm{def}}{=} M$, equipped with its operator norm $\norm{\,\cdotp}_\infty$.


\paragraph{Jordan homomorphisms} 

Let us first recall some facts on Jordan $*$-homomorphisms that we require in this paper. A Jordan $*$-homomorphism between von Neumann algebras $M$ and $N$ is a linear map $J \co M \to N$ that satisfies $J(x^2)=J(x)^2$ and $J(x^*)=J(x)^*$ for any $x \in M$. 
Any Jordan homomorphism is a positive contraction and any Jordan monomorphism is an isometry.

Let $J \co M \to N$ be a Jordan $*$-homomorphism and let $Z \subset N$ be the von Neumann algebra generated by $J(M)$. Let $e \ov{\mathrm{def}}{=} J(1)$. Then $e$ is a projection and $e$ is the unit of $Z$. According to \cite[Theorem 3.3]{Sto11}, there exist projections $g$ and $f$ in the center of $Z$ such that
\begin{itemize}
	\item [(i)] $g+f=e$.
	\item [(ii)] $x \mapsto J(x)g$ is a $*$-representation.
	\item [(iii)] $x \mapsto J(x)f$ is an anti-$*$-representation. 
\end{itemize}
Let $Z_1 \ov{\mathrm{def}}{=} Zg$ and $Z_2 \ov{\mathrm{def}}{=} Zf$.  We let $\pi \co M \to Z_1$ and $\sigma \co M \to Z_2$ be defined by $\pi(x) \ov{\mathrm{def}}{=} J(x)g$ and $\sigma(x) \ov{\mathrm{def}}{=} J(x)f$, for all $x \in M$. Then, $Z=Z_1 \mathop{\oplus}\limits^{\infty}Z_2$ and $J(x)=\pi(x)+\sigma(x)$, for all $x \in M$. We have the suggestive notations 
\begin{equation}
\label{pisigma}
J
=\begin{bmatrix}
\pi&0\\
0&\sigma
\end{bmatrix}
\qquad \text{and} \qquad
J(a)
=\begin{bmatrix}
\pi(a)&0\\
0&\sigma(a)
\end{bmatrix}
\end{equation}
to refer to such a central decomposition. We note that $J$ is normal (i.e. weak* continuous) if and only if $\pi$ and $\sigma$ are normal. By \cite[page 774]{KaR2}, for any $h,x \in M$, we have
\begin{equation}
\label{Equa-magique-1}
J(hxh)
=hJ(x)h.
\end{equation}

\paragraph{Isometries on noncommutative $\L^p$-spaces} Let $M$ and $N$ be two semifinite von Neumann algebras. For any $1 \leq p < \infty$, consider the associated noncommutative $\L^p$-spaces $\L^p(M)$ and $\L^p(N)$. A remarkable theorem of Yeadon \cite{Yea1} asserts that if $p \not= 2$ and $T \co \L^p(M) \to \L^p(N)$ is a (not necessarily surjective) linear isometry, then there exist an injective normal Jordan $*$-homomorphism $J \co M \to N$, a positive operator $b$ affiliated with $N$ and a partial isometry $w \in N$ such that 
\begin{equation}
\label{Yeadon-complement2}
J(1)
=s(b)
=w^*w.
\end{equation}
(in particular $w^*wb=b$), $J(x)$ commutes with $b$ for all $x \in M$,
and
\begin{equation}
\label{Yeadon}
T(x) 
= wbJ(x),
\end{equation}
for all $x \in M \cap \L^p(M)$ with
\begin{equation}
\label{preservation-trace}
\tau(b^pJ(x))=\tau(x), \quad x \in M_+.
\end{equation}
Moreover, it is easy to check that the proof of \cite[Theorem 2]{Yea1} implies that if $T$ is positive then $w$ is a projection and we have
\begin{equation}
\label{Yeadon-complement}
J(1)
=w.
\end{equation} 

\begin{remark} \normalfont
We warn the reader that the paper \cite{Yea1} contains several errors. The passages ``onto a weakly closed $*$-subalgebra of $\mathcal{A}_2$'' of \cite[Theorem 2]{Yea1} and ``$J(\mathcal{A}$) is a $W^*$-algebra'' of \cite[page 45]{Yea1} are\footnote{\thefootnote. Consider the map $\M_n \mapsto \M_{2n}$, $x \mapsto \begin{bmatrix}
   x  & 0  \\
   0  & x^T  \\
\end{bmatrix}$. The range of a normal Jordan $*$-homomorphism is \textit{not} necessarily  a von Neumann algebra.} false. By pure recopying, theses errors are still unfortunately present in some recent papers.
\end{remark}


\newpage

\paragraph{Local liftings} Recall the following result from \cite{Arh1} and \cite{ArR1}. The last sentence is a new observation which is not difficult and left to the reader.

\begin{thm}
\label{Th-relevement-cp} 
Let $M$ and $N$ be von Neuman algebras. Suppose $1 \leq p < \infty$. Let $T \co \L^p(M) \to \L^p(N)$ be a positive linear map. Let $h$ be a positive element of $\L^p(M)$. Then there exists a unique linear map $v \co M \to s(T(h))Ns(T(h))$ such that 
\begin{equation}
\label{equa-relevement}
T\big(h^{\frac{1}{2}}xh^{\frac{1}{2}}\big)
=T(h)^{\frac{1}{2}}v(x)T(h)^{\frac{1}{2}},\qquad x \in M.
\end{equation}
Moreover, this map $v$ is unital, contractive, and normal. If $T$ is $n$-positive ($1 \leq n \leq \infty)$, then $v$ is also $n$-positive. 
\end{thm}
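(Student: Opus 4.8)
The plan is to construct $v$ by ``dividing'' $T(h^{\frac12}xh^{\frac12})$ by $T(h)^{\frac12}$ on both sides, using the spectral projections of $T(h)$ to regularize the unbounded factor $T(h)^{-\frac12}$. Write $k \ov{\mathrm{def}}{=} T(h) \in \L^p(N)_+$, $q \ov{\mathrm{def}}{=} s(k)$, and introduce the positive linear map $\Phi \co M \to \L^p(N)$, $\Phi(x) \ov{\mathrm{def}}{=} T(h^{\frac12}xh^{\frac12})$, so that $\Phi(1)=k$ and, since $0 \leq \Phi(x) \leq \|x\|k$ for $x \geq 0$, every $\Phi(x)$ has support dominated by $q$, i.e. $q\Phi(x)q=\Phi(x)$. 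For $n \geq 1$ put $e_n \ov{\mathrm{def}}{=} 1_{[\frac1n,\infty)}(k) \in N$ and $f_n \ov{\mathrm{def}}{=} k^{-\frac12}e_n \in N$ (bounded Borel functional calculus of $k$), and set $v_n(x) \ov{\mathrm{def}}{=} f_n\Phi(x)f_n \in e_nNe_n \subseteq qNq$. The two identities I rely on throughout are $f_nk^{\frac12}=e_n$ and, for $m \geq n$, $e_nf_m=f_n$; both are immediate from the functional calculus of the single operator $k$. They give at once $k^{\frac12}v_n(x)k^{\frac12}=e_n\Phi(x)e_n$ and the coherence relation $e_nv_m(x)e_n=v_n(x)$ for all $m\geq n$.

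Next I would establish uniqueness, which also isolates the mechanism used later. If $w \in qNq$ satisfies $k^{\frac12}wk^{\frac12}=0$, then multiplying on both sides by $f_n$ inside $\L^0(N)$ and using $f_nk^{\frac12}=e_n$ yields $e_nwe_n=0$; letting $n \to \infty$ (so that $e_n \uparrow q$ and $e_nwe_n \to qwq=w$ in the weak* topology) forces $w=0$. Applied to the difference of two candidates this gives uniqueness. For existence, fix $x$ with $0 \leq x \leq 1$; then $0 \leq v_n(x) \leq f_nkf_n=e_n \leq q$, so the net lies in the weak*-compact order interval $[0,q]$. Any weak* cluster point $v(x)$ satisfies $e_nv(x)e_n=\lim_m e_nv_m(x)e_n=v_n(x)$ by the coherence relation, and since $e_nv(x)e_n \to v(x)$ weak* this determines $v(x)$ uniquely; hence $v_n(x)$ converges weak* to a well-defined $v(x) \in qNq$. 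Extending over the decomposition into four positive parts defines a linear map $v \co M \to qNq$ which is positive (a weak* limit of the positive maps $v_n$) and unital, since $v(1)=\lim_n e_n=q$; a positive unital map into $qNq$ is automatically contractive.

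To recover the defining relation I would pass to the limit in $k^{\frac12}v_n(x)k^{\frac12}=e_n\Phi(x)e_n$. The right-hand side converges to $q\Phi(x)q=\Phi(x)$ in $\L^p(N)$, while the left-hand side converges weakly to $k^{\frac12}v(x)k^{\frac12}$ because the map $N \to \L^p(N)$, $y \mapsto k^{\frac12}yk^{\frac12}$ is weak*-to-weak continuous on bounded sets: testing against $z \in \L^{p'}(N)$ turns it, via H\"older, into pairing with the fixed functional $k^{\frac12}zk^{\frac12} \in \L^1(N)$. Thus $T(h^{\frac12}xh^{\frac12})=k^{\frac12}v(x)k^{\frac12}$, as required. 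Normality follows from the same continuity: for a bounded increasing net $x_i \uparrow x$ in $M_+$ one has $h^{\frac12}x_ih^{\frac12}\uparrow h^{\frac12}xh^{\frac12}$ in $\L^p(M)$, hence $\Phi(x_i)\to\Phi(x)$ in $\L^p(N)$; writing $v_\infty$ for the weak* limit of the bounded increasing net $v(x_i)$, we get $k^{\frac12}v_\infty k^{\frac12}=\Phi(x)=k^{\frac12}v(x)k^{\frac12}$, so $v_\infty=v(x)$ by uniqueness and $v$ is normal. Finally, for the $n$-positivity statement I would apply the existence part to the positive map $\Id_{\Mat_n}\ot T$ on $\L^p(\Mat_n(M))$ together with the element $\mathrm{diag}(h,\dots,h)$: its lift is forced by uniqueness to be $[x_{ij}] \mapsto [v(x_{ij})]$, and positivity of that lift is exactly $n$-positivity of $v$ (the case $n=\infty$ being identical). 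The main obstacle is the convergence of $(v_n(x))_n$ together with the legitimacy of passing the defining identity to the limit; both are handled by the coherence relation $e_nv_m(x)e_n=v_n(x)$ and the weak*-to-weak continuity of $y\mapsto k^{\frac12}yk^{\frac12}$, the one genuinely delicate point being the manipulation of the unbounded factors $k^{\pm\frac12}$ inside $\L^0(N)$.
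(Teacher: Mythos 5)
The paper does not actually prove Theorem \ref{Th-relevement-cp}: it is imported verbatim from \cite{Arh1} and \cite{ArR1} (with the $n$-positivity clause ``left to the reader''), so there is no in-paper proof to measure you against. Your argument is a correct, self-contained reconstruction in the semifinite setting, and it follows the same basic strategy as the cited references: truncate by the spectral projections $e_n=1_{[1/n,\infty)}(T(h))$, use the coherence relation $e_nv_m(x)e_n=v_n(x)$ to force weak* convergence of the approximants, and recover both the identity \eqref{equa-relevement} and uniqueness from the injectivity of $w\mapsto T(h)^{1/2}wT(h)^{1/2}$ on $s(T(h))Ns(T(h))$; the derivation of $n$-positivity from uniqueness applied to $\Id_{\Mat_n}\ot T$ is exactly the intended ``easy observation''. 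Two points deserve a word more than you give them: (i) the theorem is stated for arbitrary von Neumann algebras, whereas your Hölder/trace manipulations (e.g.\ $k^{1/2}zk^{1/2}\in\L^1(N)=N_*$) presuppose the semifinite, tracial picture of $\L^p$ --- this suffices for every application in the paper, but the general case requires the Haagerup $\L^p$ analogues of these facts; (ii) in the normality step, passing from $h^{1/2}x_ih^{1/2}\uparrow h^{1/2}xh^{1/2}$ to norm convergence $\Phi(x_i)\to\Phi(x)$ silently invokes the order continuity of the $\L^p$-norm for $p<\infty$, which is true but should be cited. Neither point is a gap in substance.
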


We say that $v$ is the local lifting of $T$ with respect to $h$.

\vspace{0.2cm}

\paragraph{$n$-minimal operator spaces}

Let $n \geq 1$ be an integer. An operator space $E$ is called $n$-minimal if there exists a (Hausdorff) compact topological space $K$ and a completely isometric map $i \co E \to \mathrm{C}(K,\M_n)$. 

Let $A$ be a $\mathrm{C}^*$-algebra. We denote by $A^\op$ the opposite $\mathrm{C}^*$-algebra. By \cite[Theorem 2.2]{Roy1}, we have the following equivalence  
\begin{equation}
\label{Equivalence-Roydor}
A \text{ is $n$-minimal}
\iff \norm{\Id_A}_{\cb, A \to A^\op} \leq n.
\end{equation}
Recall that a $\mathrm{C}^*$-algebra $A$ is $n$-subhomogeneous \cite[Definition 2.7.6]{BrO1} if all its irreducible representations have dimension at most $n$ and subhomogeneous if $A$ is $n$-subhomogeneous for some $n$. By \cite[Theorem 2.2]{Roy1}, a $\mathrm{C}^*$-algebra $A$ is $n$-minimal if and only if $A$ is $n$-subhomogeneous.

It is known, e.g. \cite[Lemma 2.4]{ShU1}, that a von Neumann algebra $M$ is $n$-subhomogeneous if and only if there exist some distinct integers $n_1,\ldots,n_k \leq n$,  some abelian von Neumann algebras $A_i \not=\{0\}$ and a $*$-isomorphism 
\begin{equation}
\label{AVN-subh}
M
=\M_{n_1}(A_1) \oplus \cdots \oplus \M_{n_1}(A_1).
\end{equation}

\section{Completely positive isometries on noncommutative $\L^p$-spaces}
\label{Sec-2_isometries}


The following result will be used in the proof of our main result.

\begin{prop}
\label{Prop-Local-lifting}
Suppose that $M$ is finite von Neumann algebra and that $N$ is a semifinite von Neumann algebra. Suppose $1 \leq p < \infty$, $p \not=2$. Let $T=wbJ \co \L^p(M) \to \L^p(N)$ be a positive isometry. Then $M \to wNw$, $x \mapsto wJ(x)w$ is the local lifting associated with $T$ and $h=1$ by Theorem \ref{Th-relevement-cp}.
\end{prop}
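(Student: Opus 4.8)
The plan is to invoke Yeadon's description of $T$, to compute $T(1)$ explicitly, and then to recognize the map $x \mapsto wJ(x)w$ as a solution of the defining relation \eqref{equa-relevement} of the local lifting; the conclusion will then follow from the uniqueness asserted in Theorem \ref{Th-relevement-cp}.

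First I would collect what positivity and finiteness give. Since $M$ is finite we have $1 \in \L^p(M)$ and $M = M \cap \L^p(M)$, so $h = 1$ is an admissible positive element and \eqref{Yeadon} holds in the form $T(x) = wbJ(x)$ for every $x \in M$. As $T$ is a positive isometry and $p \not= 2$, \eqref{Yeadon-complement} shows that $w$ is a projection equal to $J(1)$, while \eqref{Yeadon-complement2} gives $s(b) = J(1) = w$, and $J(x)$ commutes with $b$ for all $x$. Thus $w$ is the support projection of $b$, hence of $b^{1/2}$, so that $wb = bw = b$ and $wb^{1/2} = b^{1/2}w = b^{1/2}$.

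The central computation is then the evaluation of $T(1)$: from $J(1) = w$ and $wb = bw = b$ one gets $T(1) = wbw = b$, whence $T(1)^{1/2} = b^{1/2}$ and $s(T(1)) = s(b) = w$. In particular the codomain $s(T(1))\,N\,s(T(1))$ of the local lifting coincides with $wNw$, matching the statement. Specializing \eqref{equa-relevement} to $h = 1$ turns the defining identity into $T(x) = b^{1/2} v(x) b^{1/2}$, and I would verify that $v(x) = wJ(x)w$ satisfies it by writing
\[
b^{1/2}\big(wJ(x)w\big)b^{1/2} = b^{1/2}J(x)b^{1/2} = J(x)b = bJ(x) = wbJ(x) = T(x),
\]
using $wb^{1/2} = b^{1/2}$ and the commutation of $J(x)$ with $b$ (hence with $b^{1/2}$). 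Since $x \mapsto wJ(x)w$ is linear with range in $wNw = s(T(1))\,N\,s(T(1))$, the uniqueness clause of Theorem \ref{Th-relevement-cp} identifies it with the local lifting of $T$ with respect to $h = 1$.

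The argument is mostly bookkeeping, and I expect the only delicate point to be the manipulation of the possibly unbounded affiliated operator $b$: one has to justify that $w = s(b)$ acts as a unit on $b^{1/2}$ and that the commutation of the bounded operator $J(x)$ with $b$ propagates to $b^{1/2}$ via the Borel functional calculus. Once these facts are secured, the displayed chain of equalities is immediate and no further obstacle arises.
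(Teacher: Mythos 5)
Your proposal is correct and follows essentially the same route as the paper: both reduce to Yeadon's form, compute $T(1)=b$, and verify the defining relation \eqref{equa-relevement} with $h=1$ before invoking uniqueness. The only cosmetic difference is that the paper first shows $wJ(x)w=J(x)$ via the Jordan identity \eqref{Equa-magique-1}, whereas you absorb the projections $w$ into $b^{1/2}$ using $wb^{1/2}=b^{1/2}w=b^{1/2}$; both manipulations are equivalent.
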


\begin{proof} 
First, we suppose that $M$ is finite. Note that $1$ belongs to $\L^p(M)$. Since $b$ commute with $J(x)$ for any $x \in M$, we can write
$$
T(x)
=wbJ(x)
=bJ(x)
=b^{\frac{1}{2}}J(x)b^{\frac{1}{2}}, \quad x \in M.
$$ 
Note that $w \ov{\eqref{Yeadon-complement}\eqref{Yeadon-complement2}}{=} s(b)$ is a projection.  
For any $x \in M$, note that
$$
J(x)
=J(1x1)
\ov{\eqref{Equa-magique-1}}{=} J(1)J(x)J(1)
\ov{\eqref{Yeadon-complement}}{=} wJ(x)w.
$$
So we have a well-defined normal map $J \co M \to wNw$ which is unital. Furthermore, we have
\begin{equation}
\label{Magic3}
T(1)
=wbJ(1)
\ov{\eqref{Yeadon-complement}}{=} wbw
=b.
\end{equation}
So for any $x \in M$ we have
$$
T(x)
=b^{\frac{1}{2}}J(x)b^{\frac{1}{2}}
\ov{\eqref{Magic3}}{=} T(1)^{\frac{1}{2}}J(x)T(1)^{\frac{1}{2}}.
$$
Hence $J$ is the local lifting of $T \co \L^p(M) \to \L^p(wNw)$ in the sense of Theorem \ref{Th-relevement-cp} with $h=1$. 
\end{proof}

\begin{remark} \normalfont
Using the local lifting of Theorem \ref{Th-relevement-cp}, it is not difficult to give an alternative proof of Yeadon's theorem for \textit{positive} isometries in the case where $M$ is finite using the well-known result \cite[Lemma 3.3]{Wol1}.
\end{remark}

The following result was announced in \cite[Theorem 2.10]{HRW1}. 

\begin{thm}
\label{thm-iso-cp}
Suppose that $M$ and $N$ are semifinite von Neumann algebras. Suppose $1\leq p < \infty$, $p \not=2$. Let $T=wbJ \co \L^p(M) \to \L^p(N)$ be a 2-positive isometry. Then the associated normal Jordan $*$-homomorphism $J$ is multiplicative.
\end{thm}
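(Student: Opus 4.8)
The plan is to reduce the statement to the classical fact that a unital $2$-positive Jordan $*$-homomorphism is automatically multiplicative, and to access that fact through the local lifting of Theorem~\ref{Th-relevement-cp}. Since $T$ is in particular positive, Yeadon's description via \eqref{Yeadon}, \eqref{Yeadon-complement} and \eqref{Yeadon-complement2} gives that $w=J(1)=s(b)$ is a projection with $wb=b$, and that $b$ commutes with every $J(x)$.

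First I would work over finite corners. Fix a projection $e\in M$ with $\tau(e)<\infty$, so that $e\in\L^p(M)$ and $eMe$ is a finite von Neumann algebra. Applying Theorem~\ref{Th-relevement-cp} to the positive map $T$ and to $h=e$ produces a unital normal map $v_e\co M\to s(T(e))Ns(T(e))$, which is moreover $2$-positive because $T$ is, satisfying $T(exe)=T(e)^{\frac12}v_e(x)T(e)^{\frac12}$. Using $T(e)=wbJ(e)=J(e)bJ(e)$ and \eqref{Equa-magique-1}, one checks that $s(T(e))=J(e)$ and, writing $T(e)^{\frac12}=b^{\frac12}J(e)$ and $T(exe)=b^{\frac12}J(exe)b^{\frac12}$ with $b$ commuting with $J(M)$, that $J(e)v_e(x)J(e)=J(exe)=J(e)J(x)J(e)$ for all $x\in M$ (the injectivity of $b^{\frac12}$ on $J(e)H$ allows the cancellation). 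Restricting to $x\in eMe$, where \eqref{Equa-magique-1} gives $J(x)=J(e)J(x)J(e)$, the map $J|_{eMe}\co eMe\to J(e)NJ(e)$ thus coincides with the compression $x\mapsto J(e)v_e(x)J(e)$, hence is $2$-positive; and it is a unital Jordan $*$-homomorphism since for $x\in eMe$ we have $x^2\in eMe$ and $J(x^2)=J(x)^2$. This is essentially the localized form of Proposition~\ref{Prop-Local-lifting}.

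The heart of the argument is then purely algebraic. For a unital $2$-positive map $\phi$ the Kadison--Schwarz inequality gives $\phi(x^*x)\geq\phi(x)^*\phi(x)$ and $\phi(xx^*)\geq\phi(x)\phi(x)^*$; when $\phi$ is moreover a Jordan $*$-homomorphism, the Jordan identity applied to $x^*\circ x$ forces $\phi(x^*x)+\phi(xx^*)=\phi(x)^*\phi(x)+\phi(x)\phi(x)^*$, so both inequalities must be equalities for every $x$. By Choi's description of the multiplicative domain of a $2$-positive map, $\phi$ is therefore multiplicative. Applying this to $\phi=J|_{eMe}$ gives $J(xy)=J(x)J(y)$ for all $x,y\in eMe$.

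Finally I would globalize by normality. Choosing an increasing net of finite-trace projections $e_i\uparrow 1$, for fixed $x,y\in M$ the elements $e_ixe_i,\,e_iye_i$ lie in $e_iMe_i$, so $J(e_ixe_iye_i)=J(e_ixe_i)J(e_iye_i)$ by the previous step. Now $e_ixe_iye_i\to xy$ strongly on bounded sets, so normality of $J$ gives $J(e_ixe_iye_i)\to J(xy)$ weak$^*$; and since $J$ is positive, normal and Jordan, the bound $J(z)^*J(z)\leq J(z^*z+zz^*)$ shows $J$ is strong$^*$-continuous on bounded sets, whence $J(e_iye_i)\to J(y)$ strongly while $J(e_ixe_i)\to J(x)$ weak$^*$ boundedly, so the product $J(e_ixe_i)J(e_iye_i)\to J(x)J(y)$ weak$^*$. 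This yields $J(xy)=J(x)J(y)$ for all $x,y\in M$, as desired. I expect the main obstacle to be twofold: cleanly extracting the $2$-positivity and Jordan property of $J$ on the finite corners through the local lifting, in particular the bookkeeping with the possibly unbounded affiliated operator $b$ in the identity $J(e)v_e(x)J(e)=J(exe)$; and the limiting step, where joint weak$^*$-continuity of multiplication fails and one must exploit strong$^*$-continuity of $J$ on bounded sets to transport multiplicativity from the corners $e_iMe_i$ to all of $M$.
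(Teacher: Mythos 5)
Your proof is correct, and its first half coincides with the paper's: both localize $T$ to finite corners, use the uniqueness part of Theorem \ref{Th-relevement-cp} to identify the local lifting with the compression $x\mapsto J(e)J(x)J(e)=J(exe)$ (Proposition \ref{Prop-Local-lifting} is exactly your computation in the special case $h=1$, $M$ finite), and deduce that $J$ is $2$-positive on the corner; the key algebraic fact that a unital $2$-positive Jordan $*$-homomorphism is multiplicative, which you reprove via the Kadison--Schwarz inequality combined with the Jordan identity and the multiplicative domain, is simply quoted in the paper as \cite[Corollary 3.2]{Cho1}. Where you genuinely diverge is the globalization. The paper first upgrades the corner-wise statement to $2$-positivity of $J$ itself on all of $M$ (invoking \cite[Lemma 2.8]{ArK} and the convergence $J(x)=\lim_e J(exe)$ from Yeadon's construction) and only then applies Choi's result once, globally. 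You instead establish multiplicativity on each corner $e_iMe_i$ and pass the identity $J(xy)=J(x)J(y)$ to the limit, using normality together with the strong$^*$-continuity of $J$ on bounded sets (extracted from $J(z)^*J(z)\leq J(z^*z+zz^*)$) to circumvent the failure of joint weak$^*$-continuity of multiplication. Your route is somewhat more self-contained --- it avoids the external lemma on limits of $2$-positive maps and makes explicit a limiting argument the paper leaves largely implicit --- at the price of the extra strong$^*$-continuity verification; both arguments are valid, and your handling of the unbounded operator $b$ (cancellation via injectivity of $b^{1/2}$ on $J(e)H$, justified by $s(b)=J(1)\geq J(e)$ from \eqref{Yeadon-complement} and \eqref{Yeadon-complement2}) is sound.
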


\begin{proof} 
First, we suppose that $M$ is finite. By proposition \ref{Prop-Local-lifting}, the map $M \to wNw$ $x \mapsto wJ(x)w$ is the local lifting of $T \co \L^p(M) \to \L^p(N)$ in the sense of Theorem \ref{Th-relevement-cp} with $h=1$. Since $T$ is 2-positive, we infer by Theorem \ref{Th-relevement-cp} that the normal Jordan $*$-monomorphism $J$ is 2-positive. Recall that, by \cite[Corollary 3.2]{Cho1} (see also \cite[Corollary 6]{Gar1} in the non-unital case), a 2-positive unital Jordan $*$-homomorphism between $\mathrm{C}^*$-algebras is a $*$-homomorphism. We conclude that $J$ is a $*$-homomorphism.

If $M$ is semifinite, we consider the net of all $\tau$-finite projections in $M$ equipped with the usual partial order. By the contruction of Yeadon \cite{Yea1}, we know that for any $x \in M$, we have $J(x)=\lim_e J(exe)$ for the strong operator topology. The first part of the proof essentially show that the map $e \mapsto J(exe)$ is 2-positive. An application of \cite[Lemma 2.8]{ArK} (and its proof) shows that $J$ is 2-positive, hence a $*$-homomorphism by \cite[Corollary 3.2]{Cho1}.
\end{proof}

Combining essentially with \cite[Proposition 3.2]{JRS1}, we deduce the following result.

\begin{cor}
\label{Cor-cp}
Suppose that $M$ are $N$ are semifinite von Neumann algebras. Suppose $1 \leq p < \infty$ with $p \not=2$. Let $T=wbJ \co \L^p(M) \to \L^p(N)$ be an isometry. Then the following conditions are equivalent.
\begin{enumerate}
	\item $T$ is 2-positive. 
	\item $T$ is completely positive.
	\item the Jordan $*$-homomorphism $J$ is a $*$-homomorphism.
	\item $T$ is completely isometric.
\end{enumerate}
\end{cor}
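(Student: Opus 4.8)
The plan is to establish the cycle of implications $(1) \Rightarrow (3) \Rightarrow (2) \Rightarrow (1)$ together with $(3) \Leftrightarrow (4)$, leaning heavily on the two results just proved. The implication $(1) \Rightarrow (3)$ is immediate: it is precisely the content of Theorem \ref{thm-iso-cp}, which asserts that $2$-positivity of $T=wbJ$ forces the associated Jordan $*$-homomorphism $J$ to be multiplicative, hence a genuine $*$-homomorphism. The reverse chain $(2) \Rightarrow (1)$ is trivial, since complete positivity entails $2$-positivity by definition. Thus the substantive work lies in showing $(3) \Rightarrow (2)$ and handling the equivalence with $(4)$.

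For $(3) \Rightarrow (2)$, I would argue that once $J$ is a normal $*$-homomorphism it is automatically completely positive (indeed any $*$-homomorphism between $\mathrm{C}^*$-algebras is completely positive), and then propagate this through the factorization $T(x) = wbJ(x)$ given in \eqref{Yeadon}. Here is where I would invoke \cite[Proposition 3.2]{JRS1}, as the statement announces: that result should supply the precise mechanism by which the data $(w,b,J)$ assemble into a completely positive map on $\L^p(N)$ when $J$ is multiplicative, using the commutation of $b$ with the range of $J$ and the support/trace-preservation conditions \eqref{Yeadon-complement2} and \eqref{preservation-trace}. The role of $b^{1/2}$ sandwiching, as in the proof of Proposition \ref{Prop-Local-lifting} where $T(x)=b^{1/2}J(x)b^{1/2}$ in the positive finite case, suggests that complete positivity of $x \mapsto b^{1/2}J(x)b^{1/2}$ follows from complete positivity of $J$ composed with the completely positive conjugation map $y \mapsto b^{1/2}yb^{1/2}$, at least formally; the genuine content of \cite[Proposition 3.2]{JRS1} is to make this rigorous at the level of unbounded $\L^p$ operators.

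For the equivalence $(3) \Leftrightarrow (4)$, I would observe that a completely isometric $T$ is in particular a complete contraction in both directions, and conversely. One direction, $(4) \Rightarrow (3)$, should follow because complete isometry implies $2$-positivity is not automatic, so instead I expect $(4)$ to be linked to $(3)$ via the completely isometric property forcing $J$ to be a complete isometry onto its range, and a normal Jordan $*$-monomorphism that is completely isometric must be multiplicative (an anti-representation tensored into $\M_2$ breaks complete isometry, as the transpose example in the introduction illustrates). For $(3) \Rightarrow (4)$, once $T=wbJ$ with $J$ a normal $*$-homomorphism, the factorization together with the isometric hypothesis on $T$ should upgrade to complete isometry, again through \cite[Proposition 3.2]{JRS1}.

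The main obstacle I anticipate is the passage $(3) \Rightarrow (2)$ at the level of noncommutative $\L^p$-spaces: complete positivity is a statement about all matrix amplifications $\Id_{\M_n} \otimes T$ acting on $\L^p(\M_n(M)) \to \L^p(\M_n(N))$, and one must verify that the factorization through $b$ and the multiplicative $J$ survives amplification with the correct positivity, handling the unbounded nature of $b$ and the trace-preservation constraint \eqref{preservation-trace}. This is exactly the technical heart that \cite[Proposition 3.2]{JRS1} is cited to resolve, so in a self-contained argument this step would require the most care; the remaining implications are essentially formal consequences of the theorems already established in this section.
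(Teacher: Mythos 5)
Your overall architecture --- (2) $\Rightarrow$ (1) trivially, (1) $\Rightarrow$ (3) by Theorem \ref{thm-iso-cp}, and (3) $\Leftrightarrow$ (4) together with (3) $\Rightarrow$ (2) via the Yeadon factorization and \cite[Proposition 3.2]{JRS1} --- matches what the paper intends; the paper gives no detailed proof at all beyond the sentence ``combining essentially with [JRS1, Proposition 3.2]'', so your skeleton is the right reconstruction. One presentational point: the equivalence (3) $\Leftrightarrow$ (4) \emph{is} the content of \cite[Proposition 3.2]{JRS1} and should simply be cited as such; your heuristic that ``an anti-representation tensored into $\M_2$ breaks complete isometry'' is motivation, not an argument, and you should not attempt to re-derive that equivalence informally.

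The genuine gap is in your step (3) $\Rightarrow$ (2). Complete positivity of $J$ (as a $*$-homomorphism) composed with the completely positive conjugation $y \mapsto b^{1/2} y b^{1/2}$ gives complete positivity of $x \mapsto b^{1/2} J(x) b^{1/2}$, but $T(x) = wbJ(x)$ coincides with $b^{1/2} J(x) b^{1/2}$ only when $w$ is the projection $J(1)=s(b)$, i.e.\ only when $T$ is already known to be positive, by \eqref{Yeadon-complement}; positivity of $T$ is not available to you when proving (3) $\Rightarrow$ (2) inside the cycle (1) $\Rightarrow$ (3) $\Rightarrow$ (2) $\Rightarrow$ (1). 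Worse, for a bare isometry the implication fails: on $\L^p(\C)=\C$ the map $T(x)=-x$ is an isometry with Yeadon triple $w=-1$, $b=1$, $J=\Id$, so $J$ is a $*$-homomorphism and $T$ is completely isometric, yet $T$ is not even positive. So either the corollary must be read with the standing assumption that $T$ is a \emph{positive} isometry (in which case your $b^{1/2}$-sandwich argument for (3) $\Rightarrow$ (2) goes through, modulo the density and unboundedness care you correctly flag), or the partial isometry $w$ must enter conditions (3) and (4) explicitly. Your proposal brushes against this issue (``in the positive finite case'') but does not resolve it, and as written the step (3) $\Rightarrow$ (2) does not close.
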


\begin{remark} \normalfont
Using \cite{LMZ1} and \cite{HRW1}, we have a similar result for completely positive disjointness preserving operators acting on noncommutative $\L^p$-spaces. Details are left to the reader.
\end{remark}

\begin{remark} \normalfont
With \cite{JRS1}, we can give a variant of the result for the case where $N$ is $\sigma$-finite.
\end{remark}

\section{Completely bounded normal $*$-Jordan homomorphisms}
\label{Section-cc-isometries}

We will use the following crucial observation.

\begin{lemma}
\label{Petit-lemme-wonderful}
Suppose that $A$ and $B$ are $\mathrm{C}^*$-algebras. Suppose that $n \geq 1$ is an integer. Let $\sigma \co A \to B$ be an anti-$*$-homomorphism which is completely bounded with $\norm{\sigma}_{\cb, A \to B} \leq n$. Then the $\mathrm{C}^*$-algebra $A/\ker \sigma$ is $n$-minimal.
\end{lemma}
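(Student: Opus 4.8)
The plan is to exploit the equivalence \eqref{Equivalence-Roydor}, which reduces proving that $A/\ker\sigma$ is $n$-minimal to showing that the identity map on $A/\ker\sigma$ has completely bounded norm at most $n$ as a map into the opposite algebra. The key conceptual input is that an anti-$*$-homomorphism into $B$ is literally the same thing as a $*$-homomorphism into the opposite algebra $B^\op$. Concretely, if $\sigma\co A \to B$ is an anti-$*$-homomorphism, then viewing $\sigma$ as a map $\sigma\co A \to B^\op$ turns it into an honest $*$-homomorphism, since the multiplication in $B^\op$ reverses the order of products and thereby cancels the order reversal built into $\sigma$. This observation is what lets me convert the ``twisted'' multiplicativity of $\sigma$ into genuine structural rigidity.

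First I would pass to the quotient: since $\ker\sigma$ is a closed two-sided ideal, $\sigma$ factors as $A \to A/\ker\sigma \xrightarrow{\tilde\sigma} B$ where $\tilde\sigma$ is an injective anti-$*$-homomorphism, hence an (anti-)$*$-monomorphism and therefore a complete isometry onto its range when regarded appropriately; in any case the complete boundedness norm is preserved or decreased under the quotient, so $\norm{\tilde\sigma}_{\cb} \leq n$. Replacing $A$ by $A/\ker\sigma$, I may assume $\sigma$ is injective with $\norm{\sigma}_{\cb} \leq n$ and must show $A$ is $n$-minimal. Now I would set up the comparison with the opposite algebra. Consider the canonical complete isometry $j_A \co A^\op \to A$ given by the identity on underlying elements (recall that passing to the opposite $\mathrm{C}^*$-algebra leaves the operator norm, hence all matrix norms, unchanged, so the flip $A \to A^\op$ and its inverse are complete isometries of the underlying \emph{Banach} spaces). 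The map $\sigma$, read as a $*$-homomorphism $A \to B^\op$, is a $*$-homomorphism and hence completely contractive into $B^\op$; equivalently, the \emph{same} linear map $\sigma\co A^\op \to B$ is completely contractive.

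The heart of the argument is then to express $\Id_{A}\co A \to A^\op$ as a factorization through $\sigma$ whose two outer factors are completely contractive and whose middle factor contributes the factor $n$. Because $\sigma\co A \to B$ is a completely bounded anti-homomorphism with $\norm{\sigma}_{\cb} \leq n$ while $\sigma\co A^\op \to B$ is completely contractive (being a genuine $*$-homomorphism after the flip), the identity $\Id_A\co A \to A^\op$ factors as
\begin{equation}
\label{factorisation-lemme}
A \xrightarrow{\ \sigma\ } B \xrightarrow{\ \sigma^{-1}\ } A^\op,
\end{equation}
where on the range of $\sigma$ the inverse $\sigma^{-1}\co \sigma(A)\to A^\op$ is completely contractive precisely because $\sigma$ is a $*$-isomorphism onto its range once the target carries the opposite product. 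Thus $\norm{\Id_A}_{\cb,A\to A^\op} \leq \norm{\sigma}_{\cb,A\to B}\,\norm{\sigma^{-1}}_{\cb,\sigma(A)\to A^\op} \leq n\cdot 1 = n$, and the conclusion follows from \eqref{Equivalence-Roydor}. The main obstacle I anticipate is the careful bookkeeping of \emph{which} algebra carries the opposite structure at each arrow: one must check that injectivity of $\sigma$ really yields a two-sided completely contractive inverse on the range (using that an injective $*$-homomorphism of $\mathrm{C}^*$-algebras is isometric, together with the complete-isometry invariance of the flip $B \leftrightarrow B^\op$), rather than merely a bounded inverse. Making \eqref{factorisation-lemme} fully rigorous is where the real work lies; everything else is a direct invocation of \eqref{Equivalence-Roydor}.
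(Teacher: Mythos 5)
Your proposal is correct and follows essentially the same route as the paper's proof: factor through the quotient $A/\ker\sigma$, view the inverse of the induced injective anti-$*$-homomorphism as a genuine $*$-homomorphism into $(A/\ker\sigma)^\op$ (hence completely contractive), bound $\norm{\Id}_{\cb, A/\ker\sigma \to (A/\ker\sigma)^\op}$ by $n$ via the factorization, and invoke \eqref{Equivalence-Roydor}. The only cosmetic difference is that the paper cites \cite{Dix} and \cite{BLM1} explicitly for the faithfulness of the induced maps, the preservation of the cb-norm under the quotient, and the complete contractivity of $*$-homomorphisms, which are exactly the bookkeeping points you flag as needing care.
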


\begin{proof}
We let $I \ov{\mathrm{def}}{=} \ker \sigma$. By \cite[Corollary 1.8.3]{Dix}, we have two faithful anti-$*$-homomorphisms $\tilde{\sigma} \co A/I \to \Ran \sigma$ and $\tilde{\sigma}^{-1} \co \Ran \sigma \to A/I$. By \cite[1.2.15]{BLM1}, we have
\begin{equation}
\label{First-estimate-cb}
\norm{\tilde{\sigma}}_{\cb,A/I \to \Ran \sigma}
=\norm{\sigma}_{\cb,A \to B} \leq n.
\end{equation}
Note that the map $\tilde{\sigma}^{-1} \co \Ran \sigma \to (A/I)^\op$ is a $*$-homomorphism. By \cite[Proposition 1.2.4]{BLM1} we obtain the estimate
\begin{equation}
\label{Second-estimate-cb}
\norm{\tilde{\sigma}^{-1}}_{\cb,\Ran \sigma \to (A/I)^\op}
\leq 1.
\end{equation}
Hence, we deduce that
\begin{align*}
\MoveEqLeft
\norm{\Id_{A/I}}_{\cb,A/I \to (A/I)^\op}
= \norm{\tilde{\sigma}^{-1}\tilde{\sigma}}_{\cb,A/I \to (A/I)^\op} \\
&\leq \norm{\tilde{\sigma}^{-1}}_{\cb,\Ran \sigma \to (A/I)^\op} \norm{\tilde{\sigma}}_{\cb,A/I \to \Ran \sigma}
\ov{\eqref{First-estimate-cb} \eqref{Second-estimate-cb}}{\leq} n.
\end{align*} 
By the characterization \eqref{Equivalence-Roydor}, we conclude that the quotient $A/I$ is $n$-minimal.
\end{proof}

Now, we characterize completely bounded normal Jordan $*$-homomorphisms.

\begin{thm}
\label{Th-carac-jordan-homo}
Suppose that $M$ and $N$ are von Neumann algebras. Suppose that $n \geq 1$ is an integer. Let $J \co M \to N$ be a normal $*$-Jordan-homomorphism. Then $J$ is completely bounded if and only if there exist a decomposition $M=M_1 \oplus M_2$ where $M_1$ and $M_2$ are von Neumann algebras and an integer $n \geq 1$ such that the restriction $J|M_1 \co M_1 \to N$ is a normal $*$-homomorphism and such that the von Neumann algebra $M_2$ is $n$-minimal.
\end{thm}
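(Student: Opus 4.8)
The plan is to reduce the whole statement to the central decomposition $J=\pi+\sigma$ recorded in \eqref{pisigma}, where $\pi \co M \to Z_1$ is a normal $*$-homomorphism and $\sigma \co M \to Z_2$ is a normal anti-$*$-homomorphism. Since $\pi$ is a $*$-homomorphism it is automatically completely contractive, so the completely bounded behaviour of $J$ is governed entirely by $\sigma$. Concretely, $\sigma(x)=J(x)f$ is obtained from $J$ by right multiplication by the central projection $f$, a completely contractive operation, so $\norm{\sigma}_{\cb} \leq \norm{J}_{\cb}$; conversely $J=\pi+\sigma$ shows $J$ is completely bounded as soon as $\sigma$ is. Thus the entire theorem amounts to deciding when the anti-$*$-homomorphism $\sigma$ is completely bounded, and Lemma \ref{Petit-lemme-wonderful} together with the characterization \eqref{Equivalence-Roydor} are exactly the tools for this.

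For the forward implication I would assume $J$ completely bounded and fix an integer $n \geq \norm{J}_{\cb} \geq \norm{\sigma}_{\cb}$. By Lemma \ref{Petit-lemme-wonderful} the quotient $M/\ker\sigma$ is then $n$-minimal. The key observation is that $\ker\sigma$ is a two-sided ideal (because $\sigma$ is an anti-homomorphism), self-adjoint (because $\sigma$ is a $*$-map), and weak* closed (because $\sigma$ is normal); hence there is a central projection $z \in M$ with $\ker\sigma=Mz$. Setting $M_1 \ov{\mathrm{def}}{=} Mz$ and $M_2 \ov{\mathrm{def}}{=} M(1-z)$ gives $M=M_1 \oplus M_2$. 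On $M_1=\ker\sigma$ one has $\sigma=0$, so $J|M_1=\pi|M_1$ is a normal $*$-homomorphism; and $M_2 \cong M/\ker\sigma$ completely isometrically (via $x \mapsto x(1-z)$), so $M_2$ is $n$-minimal. This produces the required decomposition.

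For the converse I would assume $M=M_1 \oplus M_2$ with $J|M_1$ a normal $*$-homomorphism and $M_2$ $n$-minimal. Writing $J=(J|M_1)\circ P_1+(J|M_2)\circ P_2$, where $P_1,P_2$ are the (completely contractive) central projections of $M$ onto $M_1,M_2$, it suffices to bound each restriction. The restriction $J|M_1$ is a $*$-homomorphism, hence completely contractive. For $J|M_2$ I would apply the central decomposition on $M_2$, reducing once more to its anti-$*$-homomorphism part $\sigma_2 \co M_2 \to N$, and then run the argument of Lemma \ref{Petit-lemme-wonderful} in reverse: the identity linear map $M_2 \to M_2^\op$ has completely bounded norm at most $n$ by $n$-minimality and \eqref{Equivalence-Roydor}, while $\sigma_2$ factors as this identity map followed by the induced $*$-homomorphism $M_2^\op \to N$, which is completely contractive. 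Hence $\norm{\sigma_2}_{\cb} \leq n$, so $J|M_2$, and therefore $J$, is completely bounded.

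The main obstacle I anticipate lies entirely on the forward side: extracting from $\sigma$ a \emph{central} projection $z\in M$ so that the $n$-minimal quotient furnished by Lemma \ref{Petit-lemme-wonderful} is realised as a genuine von Neumann algebra direct summand $M_2$ on which $J$ carries its anti-multiplicative part, while the complementary summand $M_1$ carries the purely $*$-homomorphic part. This hinges on verifying that $\ker\sigma$ is simultaneously two-sided, self-adjoint, and weak* closed, which is what forces it to be cut out by a central projection. Everything else reduces to routine bookkeeping with complete contractivity of $*$-homomorphisms, multiplication by central projections, and the opposite-algebra duality already packaged in Lemma \ref{Petit-lemme-wonderful} and \eqref{Equivalence-Roydor}.
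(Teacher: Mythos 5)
Your proof is correct and follows essentially the same route as the paper: decompose $J=\pi+\sigma$, bound $\norm{\sigma}_{\cb}$ by $\norm{J}_{\cb}$, apply Lemma \ref{Petit-lemme-wonderful} to get $n$-minimality of $M/\ker\sigma$, realise $\ker\sigma$ as $Mz$ for a central projection, and reverse the factorization through $\Id \co M_2 \to M_2^\op$ via \eqref{Equivalence-Roydor} for the converse. The only difference is cosmetic: your labelling ($M_1=\ker\sigma$ carrying the $*$-homomorphic part, $M_2\cong M/\ker\sigma$ being $n$-minimal) actually matches the theorem statement, whereas the paper's proof inadvertently swaps the roles of $M_1$ and $M_2$.
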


\begin{proof}
$\Rightarrow$: Suppose that $J$ is completely bounded. Consider some integer $n \geq 1$ such that $\norm{J}_{\cb,M \to N} \leq n$. Let $J=\pi +\sigma$ be the canonical decomposition of the normal Jordan $*$-momomorphism $J$ given by \eqref{pisigma}. By composition, we infer that the anti-$*$-homomorphism $\sigma \co M \to N$, $a \mapsto J(a)f$ is also completely bounded with $\norm{\sigma}_{\cb,M \to N} \leq n$. By Lemma \ref{Petit-lemme-wonderful}, we conclude that the quotient $M/\ker \sigma$ is $n$-minimal.

Note that $\ker \sigma$ is a weak* closed two-sided ideal of $M$. By \cite[Theorem 6.8.8]{KaR2}, there exists a central projection $q$ of $M$ such that $\ker \sigma = qM$. We let $M_1 \ov{\mathrm{def}}{=} (1-q)M$ and $M_2 \ov{\mathrm{def}}{=} \ker \sigma $. We have the decomposition 
\begin{equation}
\label{Inter-108}
M
=(1-q)M \oplus qM
=M_1 \oplus M_2.
\end{equation} 
It is plain that that $J|M_1 \co M_1 \to N$ is a $*$-homomorphism. Now, it suffices to note that we have a $*$-isomorphism $M_1 \ov{\eqref{Inter-108}}{=} M/M_2=M/\ker \sigma$.

$\Leftarrow$: Note that using the decomposition of the Jordan $*$-monomorphism $J| M_2 \co M_2 \to N$, we can write $J|M_2=\pi+\sigma$ where $\pi \co M_2 \to N$ is a normal $*$-homomorphism and where $\sigma \co M_2^\op \to N$ is a normal $*$-homomorphism. Since $M_2$ is $n$-minimal, by \eqref{Equivalence-Roydor}, we know that the identity map $\Id_{M_2} \co M_2 \to M_2^\op$ is completely bounded. By composition, we infer that the anti-$*$-homomorphism $\sigma \co M_2 \to N$ is completely bounded. We deduce that the restriction $J| M_2 \co M_2 \to N$ of $J$ on $M_2$ is completely bounded since $\pi$ is completely contractive by \cite[Proposition 1.2.4]{BLM1}. Note that again by \cite[Proposition 1.2.4]{BLM1} the normal $*$-homomorphism $J|M_1 \co M_1 \to N$ is completely contractive. We conclude that $J$ is completely bounded.
\end{proof}

\section{A conjecture on completely bounded isometries}
\label{Section-cb-isometries}

We state the folowing conjecture for the structure of completely bounded isometries acting on noncommutative $\L^p$-spaces associated with \textit{finite} von Neumann algebras and $p \not=2$.

\begin{conj}
\label{Th-main}
Suppose that $M$ is a finite von Neumann algebra and that $N$ is a semifinite von Neumann algebra. Suppose $1 \leq p< \infty$, $p \not=2$. Let $T \co \L^p(M) \to \L^p(N)$ be a (not necessarily surjective) isometry. Then $T$ is completely bounded if and only if there exists a decomposition $M=M_1 \oplus M_2$ where $M_1$ and $M_2$ are von Neumann algebras and an integer $n \geq 1$ such that the restriction $T|\L^p(M_1) \co \L^p(M_1) \to \L^p(N)$ is a complete isometry and such that the von Neumann algebra $M_2$ is $n$-minimal.
\end{conj}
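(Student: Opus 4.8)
The plan is to reduce the statement to the already-established characterization of completely bounded Jordan $*$-homomorphisms (Theorem \ref{Th-carac-jordan-homo}) by passing between the $\L^p$-level isometry $T$ and the $\L^\infty$-level Jordan $*$-homomorphism $J$ furnished by Yeadon's theorem. Writing $T=wbJ$ as in \eqref{Yeadon}, the central claim on which everything hinges is the equivalence that $T\co\L^p(M)\to\L^p(N)$ is completely bounded if and only if $J\co M\to N$ is completely bounded. Granting this, Theorem \ref{Th-carac-jordan-homo} immediately produces a decomposition $M=M_1\oplus M_2$ with $J|M_1$ a normal $*$-homomorphism and $M_2$ being $n$-minimal, and it remains only to translate the condition ``$J|M_1$ is a $*$-homomorphism'' into ``$T|\L^p(M_1)$ is a complete isometry,'' which is exactly the content of Corollary \ref{Cor-cp} applied to the restricted isometry $T|\L^p(M_1)$.

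First I would set up the reduction carefully. Since $M$ is finite we have $1\in\L^p(M)$, and by the argument behind Proposition \ref{Prop-Local-lifting} (using that $b$ commutes with every $J(x)$ and that $w=s(b)$) one may write $T(x)=b^{\frac12}J(x)b^{\frac12}$ for $x\in M$, with $T(1)=b$ as in \eqref{Magic3}. Thus $T$ is the sandwich of $J$ by the fixed positive density $b^{\frac12}\in\L^{2p}(N)$, which commutes with the range of $J$, and moreover $J(x)=wJ(x)w$ for $x\in M$, so that the local lifting $x\mapsto wJ(x)w$ of Proposition \ref{Prop-Local-lifting} is simply $J$ itself viewed into $wNw$. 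The more tractable half, that $J$ completely bounded implies $T$ completely bounded, should follow by realizing $T$ as the trace-preserving $\L^p$-extension of $J$ conjugated by $b^{\frac12}$: by Theorem \ref{Th-carac-jordan-homo} a completely bounded $J$ splits into a completely contractive $*$-homomorphism and an anti-$*$-homomorphism on an $n$-minimal summand, and each piece extends to $\L^p$ with controlled cb-norm, the anti-homomorphic part being handled through $\Id_{M_2}\co M_2\to M_2^{\op}$ via \eqref{Equivalence-Roydor}.

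The hard part will be the converse transfer, that $T$ completely bounded forces $J$ completely bounded. The natural tool is the local lifting of Theorem \ref{Th-relevement-cp}, since by Proposition \ref{Prop-Local-lifting} the lifting with respect to $h=1$ coincides with $J$; one would like complete boundedness of $T$ to descend quantitatively to the lifting, i.e. $\norm{J}_{\cb,M\to N}\leq C\norm{T}_{\cb,\L^p(M)\to\L^p(N)}$ for a universal constant $C$. The obstruction is precisely that Theorem \ref{Th-relevement-cp}, as stated, only transfers $n$-positivity from $T$ to its local lifting and says nothing about cb-norms; what is missing is a completely bounded analogue of the local lifting theorem. It is here that the hypotheses $p\neq2$ (needed for Yeadon's form \eqref{Yeadon}) and the finiteness of $M$ (needed to place $1$ in $\L^p(M)$ and to obtain the clean lifting with $h=1$) enter, and establishing this cb-version of the lifting is the single genuinely new ingredient keeping the statement at the level of a conjecture.

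Finally, once the transfer is in place, I would assemble both implications. For the forward implication, complete boundedness of $T$ yields complete boundedness of $J$, and Theorem \ref{Th-carac-jordan-homo} gives $M=M_1\oplus M_2$ with $J|M_1$ a normal $*$-homomorphism and $M_2$ being $n$-minimal; the restriction $T|\L^p(M_1)=wbJ|_{M_1}$ is again a Yeadon-type isometry whose Jordan part is multiplicative, so Corollary \ref{Cor-cp} shows $T|\L^p(M_1)$ is completely isometric. For the converse, a complete isometry $T|\L^p(M_1)$ is in particular completely bounded, while $M_2$ being $n$-minimal makes $J|M_2$ completely bounded by the converse half of Theorem \ref{Th-carac-jordan-homo}, whence $T|\L^p(M_2)$ is completely bounded by the easy transfer; adding the two summands along $\L^p(M)=\L^p(M_1)\oplus\L^p(M_2)$ shows $T$ is completely bounded. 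The only work beyond this bookkeeping is the cb-transfer between $T$ and $J$ discussed above.
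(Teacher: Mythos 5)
Your proposal follows essentially the same route as the paper: the $\Leftarrow$ direction is handled by splitting $T|\L^p(M_2)$ into its homomorphic and anti-homomorphic parts and using the $n$-minimality of $M_2$ together with the cb-norm $n^{|1-\frac{2}{p}|}$ of the transpose on $S^p_n$, while the $\Rightarrow$ direction is reduced, via Theorem \ref{Th-carac-jordan-homo} and Corollary \ref{Cor-cp}, to the transfer ``$T$ completely bounded $\Rightarrow$ $J$ completely bounded.'' That transfer is exactly the paper's Conjecture \ref{Conj-fundamental}, which the paper also leaves open; you have correctly identified it as the sole missing ingredient, so your reduction matches the paper's and, like the paper, stops short of a full proof of the forward implication.
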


The implication $\Leftarrow$ is true by the following reasoning. Note that using the decomposition of the normal Jordan $*$-monomorphism of the isometry $T| \L^p(M_2) \co \L^p(M_2) \to \L^p(N)$, we can write $T| \L^p(M_2)=T_1+T_2$ where $T_1 \co \L^p(M_2) \to \L^p(N)$ and $T_2 \co \L^p(M_2)^\op \to \L^p(N)$ are completely contractive. By \eqref{AVN-subh}, there exist some distinct integers $n_1,\ldots,n_k \leq n$, some abelian von Neumann algebras $A_i \not=\{0\}$ and a $*$-isomorphism
\begin{equation*}
\label{AVN-subhomogeneus-2}
M
=\M_{n_1}(A_1) \oplus \cdots \oplus \M_{n_1}(A_1).
\end{equation*} 
So we have an isometrical isomorphism
\begin{equation*}
\label{AVN-subhomogeneus}
\L^p(M_2)
=S^p_{n_1}(\L^p(A_1)) \oplus_p \cdots \oplus_p S^p_{n_1}(\L^p(A_1)).
\end{equation*} 
It is folklore (and elementary) that the completely norm of the transpose map $t_n \co S^p_n \to S^p_n$, $x \mapsto x^T$ satisfies $\norm{t_n}_{\cb,S^p_n \to S^p_n} = n^{|1-\frac{2}{p}|}$. Using this observation and \cite[page 2]{Jun}, it is obvious that the identity map $\Id_{\L^p(M_2)} \co \L^p(M_2) \to \L^p(M_2)^\op$ is completely bounded. We infer that $T_2 \co \L^p(M_2) \to \L^p(N)$ is completely bounded. So the restriction $T| \L^p(M_2) \co \L^p(M_2) \to \L^p(M)$ of $T$ on $\L^p(M_2)$ is completely bounded. Since $T|\L^p(M_1) \co \L^p(M_1) \to \L^p(N)$ is completely contractive, we conclude that $T$ is completely bounded.



For the reverse direction, it is quite easy to see that it suffices to prove the following.

\begin{conj}
\label{Conj-fundamental}
Suppose that $M$ is a finite von Neumann algebra and that $N$ is a semifinite von Neumann algebra. Suppose $1 \leq p< \infty$, $p \not=2$. Let $T \co \L^p(M) \to \L^p(N)$ be an isometry with a Yeadon factorization $T=wbJ$. If $T$ is completely bounded then the normal $*$-Jordan homomorphism $J$ is completely bounded.
\end{conj}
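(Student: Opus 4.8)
The plan is to reduce the statement to the anti-multiplicative part of $J$ and then to exploit the numerical gap between the completely bounded norm of the transpose at the $\L^p$-level and at the $\L^\infty$-level. Write the central decomposition $J=\pi+\sigma$ of \eqref{pisigma}, where $\pi\co M\to N$ is a normal $*$-homomorphism and $\sigma\co M\to N$ is a normal anti-$*$-homomorphism, with values in orthogonal central summands $Z_1=Zg$ and $Z_2=Zf$ of $Z=\VN(J(M))$. Since $b$ commutes with $J(M)$ it commutes with the central projections $g$ and $f$, so compressing the output of $T$ by $f$ (right multiplication by a projection, a completely contractive operation on $\L^p(N)$) produces the completely bounded map $x\mapsto T(x)f=wb\,\sigma(x)$. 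As $\pi$ is completely contractive by \cite[Proposition 1.2.4]{BLM1}, it suffices to show that $\sigma$ is completely bounded; and by \eqref{Equivalence-Roydor}, together with the reasoning of Lemma \ref{Petit-lemme-wonderful} and of the converse direction of Theorem \ref{Th-carac-jordan-homo}, $\sigma$ is completely bounded if and only if the finite von Neumann algebra $M/\ker\sigma$ is subhomogeneous.

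The crucial algebraic identity is obtained by multiplying Yeadon's factorization on the left by $w^*$: using $w^*w=s(b)$ from \eqref{Yeadon-complement2} and $s(b)b=b$, one gets
\[
w^*T(x)=bJ(x),\qquad x\in M,
\]
where $M\subseteq\L^p(M)$ because $M$ is finite. One cannot simply divide by $b$: the operator $b^{-1}$ is unbounded and $bJ(x)$ lives in $\L^p$ whereas $J(x)$ lives in $\L^\infty$, which is exactly why the weight $b$ could a priori destroy complete boundedness. The guiding model is $M=\M_n$ with $\sigma$ the transpose: the commutation of $b$ with $\sigma(\M_n)=\M_n$ forces $b$ to be a scalar, the normalisation $\tau(b^pJ(x))=\tau(x)$ of \eqref{preservation-trace} fixes that scalar, and since left multiplication by the unitary $w$ is a complete isometry on $S^p_n$ one finds $\norm{T}_{\cb}=\norm{t_n}_{\cb,S^p_n\to S^p_n}=n^{|1-\frac{2}{p}|}$ while $\norm{J}_{\cb,\M_n\to\M_n}=n$, whence $\norm{J}_{\cb}=\norm{T}_{\cb}^{1/|1-\frac{2}{p}|}$. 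The point is that the transpose growth at the $\L^p$-level is precisely a power of its growth at the $\L^\infty$-level, and this power is nonzero exactly because $p\neq 2$.

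The general argument would run by contradiction. Suppose $\sigma$ is not completely bounded, so $M/\ker\sigma$ is finite but not subhomogeneous; then it contains, inside a corner, a copy of $\M_n$ for arbitrarily large $n$. Lifting the corresponding matrix units to $M$, the restriction of $\sigma$ is an injective normal anti-$*$-homomorphism, i.e.\ a transpose composed with a $*$-isomorphism onto a subfactor of $N$. Since $b$ commutes with this subfactor, its compression is a scalar in the matrix directions, and \eqref{preservation-trace} again fixes the normalisation; composing with left multiplication by $w$ one would obtain $\norm{T}_{\cb}\geq\norm{T|_{\text{block}}}_{\cb}\gtrsim n^{|1-\frac{2}{p}|}$, and letting $n\to\infty$ contradicts the complete boundedness of $T$. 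An inspection of the constants along the way should moreover yield the quantitative estimate $\norm{J}_{\cb,M\to N}\leq\norm{T}_{\cb}^{1/|1-\frac{2}{p}|}$.

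The hard part will be the third step in the diffuse case. When $M/\ker\sigma$ has a type $\mathrm{II}_1$ part there are no minimal projections, so only approximate matrix units are available, and one must guarantee both that the transpose-like behaviour of $\sigma$ survives on them and, more delicately, that the weight $b$ cannot absorb or cancel the factor $n^{|1-\frac{2}{p}|}$. Controlling $b$ uniformly on these subsystems — establishing, from its commutation with $\sigma(M)$ and the trace identity \eqref{preservation-trace}, that its compression stays essentially scalar with a normalisation that prevents the lower bound from collapsing — is the main obstacle, and is the reason the statement is recorded here only as a conjecture.
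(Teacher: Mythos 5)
The statement you are proving is stated in the paper only as Conjecture \ref{Conj-fundamental}; the paper contains no proof of it, so there is nothing to match your argument against, and your own text concedes at the end that the decisive step is missing. Your reductions are correct and consistent with the paper's machinery: splitting $J=\pi+\sigma$ via \eqref{pisigma}, noting that $\pi$ is automatically completely contractive, that $b$ commutes with the central projection $f\in Z$ so that $x\mapsto T(x)f=wb\sigma(x)$ is completely bounded, and that by Lemma \ref{Petit-lemme-wonderful} together with \eqref{Equivalence-Roydor} the complete boundedness of $\sigma$ is equivalent to the subhomogeneity of $M/\ker\sigma$. The $\M_n$ model computation with $\norm{t_n}_{\cb,S^p_n\to S^p_n}=n^{|1-\frac{2}{p}|}$ is also the right heuristic. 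But what remains is precisely the content of the conjecture, not a technicality.

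Two concrete points. First, the difficulty is not the absence of matrix units in the type $\mathrm{II}_1$ case: since $\ker\sigma=qM$ for a central projection $q$, the quotient $M/\ker\sigma$ is literally the summand $(1-q)M$, and any non-subhomogeneous finite von Neumann algebra contains exact unital copies of $\M_{2^k}$ in a corner by halving projections. The real gap is your third step. Writing $e=\sigma(1_{\M_n})$ and $be=1\otimes b_0$ in $eNe\cong \M_n\otimes N_0$, the block of $x\mapsto b\sigma(x)$ has completely bounded norm exactly $n^{|1-\frac{2}{p}|}\,\norm{b_0}_{\L^p(N_0)}$, so you need a lower bound on $\norm{b_0}_p$ that does not decay too fast in $n$. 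The trace identity \eqref{preservation-trace} does not provide one: since $J=\pi+\sigma$, it only gives $\tau\bigl(b^p\sigma(e_M)\bigr)\leq \tau(e_M)$, i.e.\ $n\,\tau_{N_0}(b_0^p)\leq \tau(e_M)$, an upper bound of order $\norm{b_0}_p\lesssim n^{-1/p}$. For $1\leq p<2$ one has $|1-\frac{2}{p}|=\frac{2}{p}-1$ and $n^{\frac{2}{p}-1}\cdot n^{-\frac{1}{p}}=n^{\frac{1}{p}-1}\to 0$, so the proposed lower bound on $\norm{T}_{\cb}$ is compatible with collapse and yields no contradiction. What must be ruled out is that the isometry condition concentrates almost all of the mass of $T(x)$ on the $\pi$-part while $b$ attenuates the $\sigma$-part; nothing in your sketch excludes this, and the claimed estimate $\norm{J}_{\cb}\leq \norm{T}_{\cb}^{1/|1-\frac{2}{p}|}$ is therefore unsubstantiated. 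This is the open problem, correctly located but not solved.
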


\begin{remark} \normalfont
Note that the isometric Schur multipliers on $S^p$ are described in \cite{Arh3} and it is easy to check that these multipliers are completely contractive. 
\end{remark}

\textbf{Acknowledgment}.
I will thank Christian Le Merdy for a short discussion on this topic.

\small


\vspace{0.2cm}
\footnotesize{
\noindent C\'edric Arhancet\\ 
\noindent13 rue Didier Daurat, 81000 Albi, France\\
URL: \href{http://sites.google.com/site/cedricarhancet}{https://sites.google.com/site/cedricarhancet}\\
cedric.arhancet@protonmail.com\\

}


\begin{thebibliography}{79}















\bibitem[ArF3]{ArF3}
J. Arazy and Y. Friedman.
\newblock The isometries of $C_p^{n,m}$ into $C$.
\newblock Israel J. Math. 26 (1977), no. 2, 151--165.

\bibitem[Ara1]{Ara1}
J. Arazy.
\newblock The isometries of $C_p$.
\newblock Israel J. Math. 22 (1975), no. 3-4, 247--256.


\bibitem[Arh1]{Arh1}
C. Arhancet.
\newblock Positive contractive projections on noncommutative $\L^p$-spaces.
\newblock Preprint, arXiv:1909.00391.


\bibitem[Arh3]{Arh3}
C. Arhancet.
\newblock Noncommutative Fig\`a-Talamanca-Herz algebras for Schur multipliers.
\newblock Integral Equations Operator Theory 70 (2011), no. 4, 485--510. 

\bibitem[ArR1]{ArR1}
C. Arhancet and Y. Raynaud.
\newblock 2-positive contractive projections on noncommutative $\mathrm{L}^p$-spaces.
\newblock Preprint, arXiv:1912.03128.

\bibitem[ArK]{ArK}
C. Arhancet and C. Kriegler.
\newblock Projections, multipliers and decomposable maps on noncommutative $\L^p$-spaces.
\newblock Preprint, arXiv:1707.05591.




\bibitem[Ban1]{Ban1}
S. Banach.
\newblock Theory of linear operations. Translated from the French by F. Jellett. With comments by A. Pelczynski and Cz. Bessaga. 
\newblock North-Holland Mathematical Library, 38. North-Holland Publishing Co., Amsterdam, 1987. 


\bibitem[BLM1]{BLM1}
D. Blecher and C. Le Merdy.
\newblock Operator algebras and their modules-an operator space approach.
\newblock London Mathematical Society Monographs. New Series, 30. Oxford Science Publications. The Clarendon Press, Oxford University Press, Oxford, 2004.


\bibitem[BrO1]{BrO1}
N. P. Brown and N. Ozawa.
\newblock $C^*$-algebras and finite-dimensional approximations.
\newblock Graduate Studies in Mathematics, 88. American Mathematical Society, Providence, RI, 2008.









\bibitem[Cho1]{Cho1}
M. D. Choi.
\newblock A Schwarz inequality for positive linear maps on $C^*$-algebras.
\newblock Illinois J. Math. 18 (1974), 565--574.










\bibitem[Dix]{Dix}
J. Dixmier.
\newblock $C\sp*$-algebras. Translated from the French by Francis Jellett.
\newblock North-Holland Mathematical Library, Vol. 15. North-Holland Publishing Co., Amsterdam-New York-Oxford, 1977. 













\bibitem[FJ1]{FJ1}
R. J. Fleming and J. E. Jamison.
\newblock Isometries on Banach spaces: function spaces.
\newblock Chapman \& Hall/CRC Monographs and Surveys in Pure and Applied Mathematics, 129. Chapman \& Hall/CRC, Boca Raton, FL, 2003.

\bibitem[FJ2]{FJ2}
R. J. Fleming and J. E. Jamison.
\newblock Isometries on Banach spaces. Vol. 2. Vector-valued function spaces.
\newblock Chapman \& Hall/CRC Monographs and Surveys in Pure and Applied Mathematics, 138. Chapman \& Hall/CRC, Boca Raton, FL, 2008.
  

\bibitem[Gar1]{Gar1}
L. T. Gardner.
\newblock Linear maps of $C^*$-algebras preserving the absolute value.
\newblock Proc. Amer. Math. Soc. 76 (1979), no. 2, 271--278. 


	










%







 




\bibitem[HRW1]{HRW1}
G. Hong, S. K. Ray and S. Wang.
\newblock Maximal ergodic inequalities for some positive operators on  noncommutative $L_p$-spaces.
\newblock arXiv:1907.12967.






\bibitem[HSZ1]{HSZ1}
J. Huang, F. Sukochev and D. Zanin.
\newblock Logarithmic submajorisation and order-preserving linear isometries.
\newblock J. Funct. Anal. 278 (2020), no. 4.


\bibitem[JRS1]{JRS1}
M. Junge, Z.-J. Ruan and D. Sherman.
\newblock A classification for 2-isometries of noncommutative $L_p$-spaces.
\newblock Israel J. Math. 150 (2005), 285--314.


\bibitem[Jun]{Jun}
M. Junge.
\newblock Fubini's theorem for ultraproducts of noncommmutative $L_p$-spaces.
\newblock Canad. J. Math. 56 (2004), no. 5, 983--1021.
%



\bibitem[Kad1]{Kad1}
R. V. Kadison.
\newblock Isometries of operator algebras. 
\newblock  Ann. of Math. (2) 54 (1951), 325--338. 



\bibitem[KaR2]{KaR2}
R. V. Kadison and J. R. Ringrose.
\newblock Fundamentals of the theory of operator algebras. Vol. II. Advanced theory. Corrected reprint of the 1986 original.
\newblock Graduate Studies in Mathematics, 16. American Mathematical Society, Providence, RI, 1997. 

















\bibitem[Lam1]{Lam1}
J. Lamperti.
\newblock On the isometries of certain function-spaces. 
\newblock Pacific J. Math. 8 (1958), 459--466. 

\bibitem[LMZ1]{LMZ1}
C. Le Merdy and S. Zadeh.
\newblock $\ell^1$-contractive maps on noncommutative $L^p$-spaces.
\newblock To appear in Journal of Operator Theory.

\bibitem[LMZ2]{LMZ2}
C. Le Merdy and S. Zadeh.
\newblock On factorization of separating maps on noncommutative $\L^p$-spaces.
\newblock Preprint, arXiv:2007.04577.














%

\bibitem[PiX]{PiX}
G. Pisier and Q. Xu.
\newblock Non-commutative $L^p$-spaces.
\newblock 1459--1517 in Handbook of the Geometry of Banach Spaces, Vol. II, edited by W.B. Johnson and J. Lindenstrauss, Elsevier (2003).

















\bibitem[Roy1]{Roy1}
J. Roydor.
\newblock Subalgebras of $C(\Omega,M_n)$ and their modules.
\newblock Illinois J. Math. 49 (2005), no. 4, 1019--1038.





\bibitem[She1]{She1}
D. Sherman.
\newblock On the structure of isometries between noncommutative $L^p$ spaces. 
\newblock  Publ. Res. Inst. Math. Sci. 42 (2006), no. 1, 45--82. 

\bibitem[ShU1]{ShU1}
T. Shulman and O. Uuye.
\newblock Approximations of subhomogeneous algebras. 
\newblock Bull. Aust. Math. Soc. 100 (2019), no. 2, 328--337. 












\bibitem[Sto11]{Sto11}
E. St\o rmer.
\newblock On the Jordan structure of C*-algebras.
\newblock Trans. Amer. Math. Soc. 120 (1965), 438--447.






\bibitem[SuV1]{SuV1}
F. Sukochev and A. Veksler.
\newblock Positive linear isometries in symmetric operator spaces.
\newblock Integral Equations Operator Theory 90 (2018), no. 5, Art. 58, 15 pp.
















\bibitem[Wat2]{Wat2}
K. Watanabe.
\newblock On isometries between noncommutative $L^p$-spaces associated with arbitrary von Neumann algebras.
\newblock J. Operator Theory 28 (1992), no. 2, 267--279.

\bibitem[Wat3]{Wat3}
K. Watanabe.
\newblock Finite measures on preduals and non-commutative $L^p$-isometries.
\newblock J. Operator Theory 33 (1995), no. 2, 371--379.

\bibitem[Wat4]{Wat4}
K. Watanabe.
\newblock An application of orthoisomorphisms to non-commutative $L^p$-isometries.
\newblock Publ. Res. Inst. Math. Sci. 32 (1996), no. 3, 493--502.

\bibitem[Wat5]{Wat5}
K. Watanabe.
\newblock Problems on isometries of non-commutative $L^p$-spaces.
\newblock Function spaces (Edwardsville, IL, 1998), 349--356, Contemp. Math., 232, Amer. Math. Soc., Providence, RI, 1999.





\bibitem[Wol1]{Wol1}
M. Wolff.
\newblock Disjointness preserving operators on $C^*$-algebras.
\newblock Arch. Math. (Basel) 62 (1994), no. 3, 248--253.


\bibitem[Yea1]{Yea1}
F. J. Yeadon.
\newblock Isometries of non-commutative $L^p$-spaces.
\newblock Math. Proc. Camb. Phil. Soc. 90 (1981), 41--50.
      

\end{thebibliography}
\end{document}